\newtheorem{theorem}{Theorem}[section]
\newtheorem{corollary}[theorem]{Corollary}
\newtheorem{prop}[theorem]{Proposition}
\theoremstyle{definition}
\newtheorem{definition}[theorem]{Definition}
\newtheorem{example}[theorem]{Example}
\newtheorem{remark}[theorem]{Remark}
\newtheorem{proposition}[theorem]{Proposition}
\newcommand{\C}{\mathbb{C}}
\DeclareMathAlphabet{\pazocal}{OMS}{zplm}{m}{n}
\newcommand{\A}{{\pazocal{A}}}
\newcommand{\cE}{{\mathcal{E}}}
\newcommand{\CC}{{\mathcal{C}}}
\def\dot{\mathchar"013A}
\newcommand{\hdot}{{\raise1pt\hbox to0.35em{\Huge $\dot$}}}
\begin{document}
\date{\today}

\title[On plus-one generated conic-line arrangements with simple singularities
]%
{On plus-one generated conic-line arrangements with simple singularities
}
\author{Anca~M\u acinic and Piotr Pokora}
\maketitle
\thispagestyle{empty}
\begin{abstract}
In this paper we study plus-one generated arrangements of conics and lines in the complex projective plane with simple singularities. We provide several degree-wise classification results that allow us to construct explicit examples of such arrangements. 
\end{abstract}
\section{Introduction}
Recently introduced by Abe in \cite{A0}, the class of plus-one generated  arrangements of hyperplanes proved to be strongly connected to  the class of free hyperplane arrangements, a connection that we will explain shortly.

  Let us recall that an arrangement of hyperplanes is called {\it free} if its associated module of derivations is a free module, over the coordinate ring (see \cite{Y} for a comprehensive survey on the freeness for hyperplane arrangements).  For a plus-one generated arrangement, the associated module of derivations is no longer free, but it admits a very simple minimal free resolution, so it is, in a way, a natural step away from the freeness property. These definitions are easily rephrased for curves, via associated modules of derivations, see Definitions \ref{def:free_curve}, \ref{def:POG_curves} in Section \ref{sec:preliminaries} for details.

In the case of projective line arrangements, the plus-one generated property appears in close relation to freeness. More precisely, if one deletes a line from a free arrangement, then the resulting arrangement is either free or plus-one generated, and the same goes for the addition of a line, see Theorem \ref{thm:Abe}. 
When passing to higher dimensions, i.e., for arrangements of hyperplanes, the result of the deletion from a free arrangement is still either free or plus-one generated. On the other hand, the addition of a hyperplane to a free arrangement does not produce only free or plus-one generated arrangements, see \cite{A0}. However, in \cite{ADen}, a 'dual' notion of plus-one generated is introduced, based on the algebra of logarithmic differential forms on a hyperplane arrangement, and the addition behaves well with respect to this dual notion.

One can wonder whether the same kind of behavior occurs for reduced plane curves, or at least for certain types of reduced curves, such as conic-line arrangements. 
Recently proven results from \cite{dim2, MP} show that the deletion or addition of a line from/to a free conic-line arrangement produces a free or a plus-one generated conic-line arrangement.
Our examples so far seem to support the hypothesis that deleting a conic from a conic-line arrangement results in either a free or a plus-one generated conic-line arrangement. Addition of a conic does not follow the same pattern, as illustrated by Example \ref{addit}.

In Section \ref{sec:preliminaries} we make an inventory of notions and results in the field that we rely on, and present a set of examples relevant to the relation between being free and plus-one generated for conic-line arrangements. 

 In Section \ref{sec:main} we present classification results for plus-one generated conic-line arrangements, under some restrictions on the types of singularities and value of the defect (see Section \ref{sec:preliminaries} for the definition). One finds that such restrictions limit severely the number of irreducible components of such arrangements and we can compare these results with similar ones on free and nearly-free arrangements from \cite{DJP, P}.
In Theorem \ref{thm:C_arr}, we consider the case of plus-one generated arrangements of conics with some simple singularities and the defect equal to $2$, which is the smallest possible value for a defect that does not produce a nearly-free curve, and we prove that such an arrangement can contain at most $4$ conics. 
In Theorem \ref{thm:CL_arr}, we work with conic-line arrangements having at least one line and one conic with simple singularities and defect $2$, to reach the conclusion that the number of irreducible components of such an arrangement can be at most $9$. The added value of our work is the number of examples of plus-one generated conic-line arrangements that appear in our proofs.

\section{Preliminaries}
\label{sec:preliminaries}
\label{sec:del}
Let us start with a general introduction. Denote by  $f \in S =  \C[x,y,z]$ the defining polynomial of a reduced plane curve $\CC \, : f=0$ in $\mathbb{P}^{2}_{\mathbb{C}}$ such that $f = f_{1} \cdots f_{k}$ and ${\rm GCD}(f_{i}, f_{j})=1$ for $i\neq j$. It means that $\CC$ consists of $k$ irreducible components $\CC= \{C_{1}, ..., C_{k}\}$ with $C_{i} \, : f_{i}=0$.  Let ${\rm Der}(S) = S \cdot \partial_{x} \oplus S\cdot \partial_{y} \oplus S\cdot \partial_{z}$. We define $D(\CC)$ to be the derivation module associated with $\CC$, namely
$$D(\CC) = \{ \theta \in {\rm Der}(S) \, : \, \theta (f) \in \langle f \rangle \}.$$
Since for every element $\theta \in {\rm Der}(S)$ we have
$$\theta(f_{1} \cdots f_{k}) = f_{1} \theta(f_{2} \cdots f_{k}) + f_{2} \cdots f_{k}\theta(f_{1}),$$
then by the inductive application of the above identity, we obtain also
\begin{equation}
\label{deriv}
 D(\CC) = \{ \theta \in {\rm Der}(S) \, : \, \theta(f_{i}) \in \langle f_{i} \rangle, \,\, i =1, \ldots, k\}.   
\end{equation}
It is well-known that
$$D(\mathcal{C}) = S\cdot \delta_{E} \oplus D_{0}(\mathcal{C}),$$
where $\delta_{E}$ denotes the Euler derivation, i.e., $\delta_{E} := x\partial_{x} + y \partial_{y} + z \partial_{z}$ and 
$ D_{0}(\mathcal{C})$ is the submodule of $ D(\mathcal{C})$ defined by

 $$
 D_{0}(\CC) := \{\theta \in Der(S) \; : \; \theta(f) = 0\}.
 $$

\begin{definition}
\label{def:free_curve}
We say that a reduced plane curve $\CC \, : f=0$ with $f \in S_{d}$ for $d\geq 1$ is \textbf{free} if $D_{0}(\CC) = S(-d_{1})\oplus S(-d_{2})$ with $d_{1}\leq d_{2}$. The pair ${\rm exp}(\CC) = (d_{1},d_{2})$ is called the exponents of $\CC$.
\end{definition}
In order to introduce the second most important class of reduced curves in our investigations, we need the following general definition.
\begin{definition}
We say that a reduced plane curve $\CC$ is an $m$-syzygy curve when the associated Milnor algebra $M(f)$ has the following minimal graded free resolution:
$$0 \rightarrow \bigoplus_{i=1}^{m-2}S(-e_{i}) \rightarrow \bigoplus_{i=1}^{m}S(1-d - d_{i}) \rightarrow S^{3}(1-d)\rightarrow S \rightarrow M(f) \rightarrow 0$$
with $e_{1} \leq e_{2} \leq ... \leq e_{m-2}$ and $1\leq d_{1} \leq ... \leq d_{m}$. The $m$-tuple $(d_{1}, ..., d_{m})$ is called the exponents of $\CC$.
\end{definition}

\begin{definition}
\label{def:POG_curves}
A reduced curve $\CC$ in $\mathbb{P}^{2}_{\mathbb{C}}$ is called \textbf{plus-one generated} with the exponents $(d_1,d_2), d_1 \leq d_2$, and level $d_{3}$ if $D_0(\CC)$ admits a minimal resolution of the form:
$$0 \rightarrow  S(-d_3-1) \rightarrow  S(-d_3) \oplus S(-d_2) \oplus S(-d_1) \rightarrow D_0(\CC) \rightarrow 0$$
\end{definition}
\begin{remark}
\begin{enumerate}
\item A $3$-syzygy reduced curve $\CC$ in $\mathbb{P}^{2}_{\mathbb{C}}$ of degree $d$ and exponents $(d_1,d_2,d_3)$ such that $d_{1}+d_{2}=d$ is precisely a plus-one generated curve with exponents $(d_{1}, d_{2})$ and level $d_{3}$. 
\item If $\CC$ is a plus-one generated curve with $d_{2}=d_{3}$, then $\CC$ is called \textbf{nearly-free}.
\end{enumerate}
\end{remark}
We will need the following characterization of plus-one generated reduced plane curves that comes from \cite{DS0}. Here by $\tau(\CC)$ we denote the total Tjurina number of a given reduced curve $\CC \subset \mathbb{P}^{2}_{\mathbb{C}}$, namely
$$\tau(\CC) = \sum_{p \in {\rm Sing}(\CC)} \tau_{p}$$
with $\tau_{p}$ being a local Tjurina number of a singular point $p \in \CC$.
\begin{proposition}[Dimca-Sticlaru]
\label{dimspl}
Let $\CC : f=0$ be a reduced $3$-syzygy curve of degree $d\geq 3$ with the exponents $(d_{1},d_{2},d_{3})$. Then $\CC$ is plus-one generated if and only if
$$\tau(\CC) = (d-1)^{2} - d_{1}(d-d_{1}-1) - (d_{3}-d_{2}+1).$$
\end{proposition}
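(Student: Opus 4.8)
The plan is to read off the total Tjurina number $\tau(\CC)$ directly from the minimal free resolution of the Milnor algebra $M(f)$ via Hilbert series, thereby producing a formula valid for \emph{every} $3$-syzygy curve purely in terms of the exponents, and then to compare this universal expression with the stated right-hand side. The equivalence will fall out of a single polynomial factorization, once combined with the numerical constraints that a genuine $3$-syzygy curve must satisfy.

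First I would pass to Hilbert series. For $m=3$ the defining resolution of $M(f)$ reads
$$0 \rightarrow S(-e_1) \rightarrow \bigoplus_{i=1}^{3} S(1-d-d_i) \rightarrow S^{3}(1-d) \rightarrow S \rightarrow M(f) \rightarrow 0,$$
so its alternating sum gives, with $P(t) := 1 - 3t^{d-1} + \sum_{i=1}^{3} t^{d-1+d_i} - t^{e_1}$,
$$H(M(f),t) = \frac{P(t)}{(1-t)^{3}}.$$
Since $\CC$ is reduced, the singular subscheme defined by the Jacobian ideal is zero-dimensional, hence $M(f)$ has Krull dimension $1$ and its Hilbert polynomial is the constant $\tau(\CC)$ (the length of that subscheme). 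This forces $(1-t)^2 \mid P(t)$ and $\tau(\CC) = \tfrac12 P''(1)$. The vanishing $P'(1)=0$ pins down the relation degree as $e_1 = d_1+d_2+d_3$, and a short computation of $P''(1)$ then yields the universal identity
$$\tau(\CC) = (d-1)(d_1+d_2+d_3) - (d_1 d_2 + d_1 d_3 + d_2 d_3).$$

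It remains to match this against $\rho := (d-1)^2 - d_1(d-d_1-1) - (d_3-d_2+1)$. Expanding both sides, one verifies the factorization
$$\tau(\CC) - \rho = (d_1 + d_2 - d)(d - d_1 - d_3 - 2).$$
By the Remark following Definition \ref{def:POG_curves}, $\CC$ is plus-one generated exactly when $d_1+d_2=d$ (the condition $d_3 \ge d_2$ being automatic from $d_1 \le d_2 \le d_3$), so the first factor vanishes precisely in the POG case; this gives the forward implication at once. For the converse I must exclude the second factor. Here I would invoke minimality of the resolution: the single relation has degree strictly larger than the top generator degree, i.e. $e_1 > d-1+d_3$ (otherwise $D_0(\CC)$ would split off a free summand and $\CC$ would be free), whence $d_1+d_2 \ge d$. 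Using $d_3 \ge d_2$ this yields $d-d_1-d_3-2 \le d-d_1-d_2-2 \le -2 < 0$, so the second factor never vanishes and $\tau(\CC)=\rho$ is equivalent to $d_1+d_2=d$, that is, to $\CC$ being POG.

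The one delicate point is exactly this last step. The naive computation produces \emph{two} factors, and without the structural inequality $d_1+d_2 \ge d$ for $3$-syzygy curves one could not rule out the spurious solutions with $d_1+d_3 = d-2$, which would satisfy $\tau(\CC)=\rho$ while failing to be POG. Securing that inequality — equivalently, that the relation degree strictly exceeds $d_3$ — is therefore the crux of the argument; the rest is the routine Hilbert-series bookkeeping and the polynomial identity displayed above.
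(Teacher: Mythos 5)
Your proof is correct, but note that the paper itself offers no argument for Proposition \ref{dimspl}: it is imported verbatim from Dimca--Sticlaru \cite{DS0}, so any comparison is really with that source rather than with the paper. Your route --- computing the Hilbert series of the length-four resolution of $M(f)$, using that the Hilbert polynomial of $M(f)$ is the constant $\tau(\CC)$ to force $(1-t)^2 \mid P(t)$, hence $e_1=d_1+d_2+d_3$ and the universal identity $\tau(\CC)=(d-1)(d_1+d_2+d_3)-(d_1d_2+d_1d_3+d_2d_3)$ --- is sound (I checked the factorization $\tau(\CC)-\rho=(d_1+d_2-d)(d-d_1-d_3-2)$; it is an identity, and it is consistent with the paper's examples, e.g.\ $(d;d_1,d_2,d_3)=(8;4,4,5)$, $\tau=35$). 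This is essentially a reconstruction of the Dimca--Sticlaru argument, and what it buys is self-containedness plus a clear isolation of the one non-formal input: the inequality $d_1+d_2\geq d$ for genuine $3$-syzygy curves, equivalently $e_1\geq d+d_3$. Your justification of that inequality via minimality is right in spirit but is the one place to be careful: if the entry of the second-syzygy matrix hitting the top generator has degree $\leq 0$, it is either a nonzero constant (contradicting minimality outright) or zero, and in the latter case one must argue that a rank-one direct summand of $D_0(\CC)$ splitting off forces freeness --- this uses that $D_0(\CC)$ is saturated and sheafifies to a rank-two reflexive, hence locally free, sheaf on $\mathbb{P}^2$, so the complementary summand is itself free; alternatively one can simply quote the du Plessis--Wall/Dimca bound that $d_1+d_2\geq d-1$ with equality if and only if $\CC$ is free. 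Two cosmetic points: for a smooth curve (which is formally $3$-syzygy with $d_1=d_2=d_3=d-1$) the Milnor algebra has Krull dimension $0$, not $1$, but the Hilbert polynomial is still the constant $\tau=0$, so your argument survives unchanged; and the phrase ``the condition $d_3\geq d_2$ being automatic'' deserves the one-line reminder that this is because the exponents of an $m$-syzygy curve are ordered by definition.
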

\begin{definition}
The number $\nu(\CC) := (d_{3}-d_{2}+1)$ is called \textbf{the defect} of $\CC$.
\end{definition}
It clear from the definitions of free and nearly-free curves that if $\CC$ is plus-one generated with $d_{3} > d_{2}$, then $\nu(\CC)\geq 2$. From now on, we will focus on the case when $\nu(\CC)=2$.

Assume from now that $\mathcal{C}$ is a CL-arrangement consisting of $k$ smooth conics and $d$ lines. Moreover, we will work with the case where all singularities of $\mathcal{C}$ are quasi-homogeneous, i.e., for every singular point $p \in {\rm Sing}(\mathcal{C})$ one has $\tau_{p} = \mu_{p}$, where $\mu_{p}$ denotes the local Milnor number and $\tau_{p}$ is the local Tjurina number.

In the theory of line arrangements we have some natural techniques that allow to construct new examples of either free or plus-one generated arrangements, namely addition-deletion techniques. More precisely, we have the following result proved by Abe. 
\begin{theorem} \cite[Theorem 1.11]{A0}
\label{thm:Abe}
 Let $\A$ be a free arrangement of lines in  $\mathbb{P}^{2}_{\mathbb{C}}$. Then:
 \begin{enumerate}
     \item For $L \in \A$, the subarrangement $\A \setminus \{L\}$ is either free or plus-one generated.
     \item Let $L$ be a line in $\mathbb{P}^{2}_{\mathbb{C}}$. Then the arrangement $\A \cup \{L\}$ is either free or plus-one generated.
 \end{enumerate}
\end{theorem}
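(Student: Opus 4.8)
The plan is to deduce both parts from one structural fact special to plane line arrangements: the restriction of $\A$ to any line is an arrangement of points on a $\PP^1$, hence a rank-two arrangement, hence \emph{automatically free}. Write $\A'=\A\setminus\{L\}$ for a deletion and $\A''=\A^{L}$ for the restriction to $L$, and recall the Terao-type left-exact sequence of $S$-modules
\[
0\longrightarrow D(\A)\longrightarrow D(\A')\xrightarrow{\ \rho\ } D(\A''),
\]
where $\rho$ sends a logarithmic derivation to its restriction along $\alpha_L$. Splitting off the common Euler derivation turns this into a three-term complex on the rank-two modules $D_0$, in which $D_0(\A'')$ is \emph{cyclic}: over $S''=S/\langle\alpha_L\rangle\cong\C[u,v]$ it is free of rank one, generated in a single degree $d_3$. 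The two assertions are the two directions of this one triple. For deletion (part (1)) the hypothesis makes $D_0(\A)$ free and the target is the middle module $D_0(\A')$. For addition (part (2)) I take $L$ to be the new line, set $\A^{+}=\A\cup\{L\}$, and read the same sequence as $0\to D_0(\A^{+})\to D_0(\A)\xrightarrow{\rho}D_0((\A^{+})^{L})$, so that now the middle module $D_0(\A)$ is the known free one and the unknown $D_0(\A^{+})=\Ker\rho$ appears as the kernel.

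For the deletion statement I would build a minimal free resolution of $D_0(\A')$ by a mapping-cone argument. Since $D_0(\A'')\cong S''(-d_3)$ is cyclic, the image of $\rho$ and its cokernel are again cyclic, and over $S$ the torsion restriction module has the length-one resolution $0\to S(-d_3-1)\xrightarrow{\alpha_L}S(-d_3)\to S''(-d_3)\to 0$. Splicing this against the free resolution of $D_0(\A)$ through the three-term complex produces a presentation
\[
0\longrightarrow S(-d_3-1)\longrightarrow S(-d_3)\oplus S(-d_2)\oplus S(-d_1)\longrightarrow D_0(\A')\longrightarrow 0 .
\]
Comparing with Definition \ref{def:POG_curves}, this is exactly a plus-one generated resolution of level $d_3$, unless the single relation $S(-d_3-1)$ maps isomorphically onto one of the three summands, in which case cancellation leaves $D_0(\A')$ free of rank two. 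Thus $\A'$ is free or POG, and which alternative occurs is governed precisely by whether $d_3+1$ coincides with an exponent of $\A$, i.e. by the numerical hypothesis of the classical Addition--Deletion Theorem of Terao.

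The addition statement follows the same template read in the other direction, with $D_0(\A^{+})=\Ker\rho$ in place of the middle term. Here I would use that a kernel of a map out of a free module is a second-syzygy, hence reflexive, rank-two $S$-module, and resolve it again through the length-one resolution of the cyclic image of $\rho$; the same degree bookkeeping forces a resolution of length at most one whose shape is either free or the POG shape of Definition \ref{def:POG_curves}. In rank three one could alternatively invoke Yoshinaga's freeness criterion and compare the second Betti number $b_2$ against the value predicted by the restriction exponents, the POG case being exactly a controlled deviation of $b_2$ by the defect.

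The main obstacle I anticipate is controlling the possible non-surjectivity of $\rho$: a priori $\Coker\rho\cong S''/J\,(-d_3)$ for some ideal $J\subseteq\C[u,v]$ could contribute extra generators and syzygies and destroy the clean three-generator, one-relation shape. This is exactly where the rank-two, cyclic nature of the restriction module is indispensable, since it caps the number of newly created generators at one; and where I would invoke reflexivity of $D_0$ (it is the module of sections of a rank-two reflexive, hence locally free, sheaf on $\PP^2_{\mathbb{C}}$) to exclude finite-length submodules and thereby bound the projective dimension so that the resolution genuinely has length $\le 1$. As an independent check once $\A'$ (resp. $\A^{+}$) is known to be a $3$-syzygy curve, one can certify the POG alternative through Proposition \ref{dimspl}: for line arrangements $\tau$, the exponent $d_1$, and the level $d_3$ are all read off from the intersection lattice, so verifying $\tau=(d-1)^2-d_1(d-d_1-1)-(d_3-d_2+1)$ confirms plus-one generation whenever freeness fails.
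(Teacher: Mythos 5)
First, a framing remark: the paper does not prove this statement at all; it is quoted from Abe \cite[Theorem 1.11]{A0}. So your proposal can only be judged against mathematical correctness and against Abe's original argument, not against an internal proof.

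Your skeleton --- deletion--restriction sequences, the observation that the restriction $\A''=\A^{L}$ has rank two and is therefore automatically free, and horseshoe/mapping-cone bookkeeping to land on the resolution shape of Definition \ref{def:POG_curves} --- is the right family of tools and is close in spirit to Abe's method. But there are two genuine problems, and the second is fatal as written. The first: your basic exact sequence is misstated. A derivation $\theta\in D(\A')$ satisfies $\theta(\alpha_{L'})\in\langle\alpha_{L'}\rangle$ only for the lines $L'\neq L$; it need not preserve $\langle\alpha_L\rangle$, so ``restriction along $\alpha_L$'' is not defined on $D(\A')$, and the sequence $0\to D(\A)\to D(\A')\xrightarrow{\rho}D(\A'')$ does not exist. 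The two correct sequences are Terao's $0\to D(\A')\xrightarrow{\,\cdot\alpha_L\,}D(\A)\xrightarrow{\,\rho\,}D(\A'')$, where restriction is defined on $D(\A)$, and Abe's Euler sequence $0\to D(\A)\to D(\A')\xrightarrow{\,\nu\,}B\cdot\bar S$, $\nu(\theta)=\theta(\alpha_L)\bmod\alpha_L$, where $\bar S=S/\langle\alpha_L\rangle$ and $B$ is Abe's restriction polynomial; note that the target of the latter is a twist of $\bar S$, not $D(\A'')$. Your sequence splices the submodule of Abe's sequence onto the target of Terao's.

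The decisive gap is the sentence ``Since $D_0(\A'')\cong S''(-d_3)$ is cyclic, the image of $\rho$ and its cokernel are again cyclic.'' Quotients of cyclic modules are cyclic, but images are \emph{submodules} of the target, and a submodule of a cyclic module over $\bar S\cong\C[u,v]$ is (a twist of) an ideal, which need not be cyclic: $(u,v)$ is not principal. This is not a technicality; it is exactly the point where freeness of $\A$ must enter, and once cyclicity is granted your horseshoe argument does all the rest, so the entire content of the theorem sits in that one unproved claim. Neither of your fallback remedies closes it: reflexivity of $D_0$ gives $\mathrm{pd}_S\,D_0\le 1$ for \emph{every} line arrangement, free or not, so it cannot bound the number of generators by three (there exist $m$-syzygy line arrangements for arbitrarily large $m$, e.g.\ generic arrangements), and ``the cyclic ambient module caps new generators at one'' is precisely the false submodule statement. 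Worse, in the addition direction cyclicity of the image is provably wrong: $\im\nu\subseteq\bar S$ is torsion free, so cyclic would mean principal, hence of projective dimension one over $S$, and Schanuel's lemma applied to $0\to D(\A\cup\{L\})\to D(\A)\to\im\nu\to 0$ would make $D(\A\cup\{L\})$ stably free, hence free --- i.e.\ every addition to a free line arrangement would be free. Adding a generic line to the triangle $xyz=0$ gives the generic arrangement of four lines, which is nearly free but not free; contradiction. What actually rescues addition in rank three, and is where freeness is genuinely used, is weaker: $\im\nu$ is generated by the $\nu$-images of a free basis $\theta_E,\theta_1,\theta_2$ of $D(\A)$, and $\nu(\theta_E)=\overline{\alpha_L}=0$, so $\im\nu$ is a \emph{two-generated} ideal of $\C[u,v]$; after extracting a GCD it is either principal (giving freeness) or Koszul-resolved, and the mapping cone then yields the POG shape. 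That two-generator bound is exactly what fails in higher rank, consistent with \cite[Example 7.4]{A0}. For deletion there is no a priori generating set of $D(\A')$, and the statement you need --- that $D(\A')/D(\A)$ is cyclic when $\A$ is free --- is the substantive core of \cite[Theorem 1.4]{A0}; it has to be proved (or cited), not inferred from the shape of $D_0(\A'')$.
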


In fact, a deletion type result as above holds in arbitrary dimension for hyperplane arrangements, see \cite[Theorem 1.4]{A0}. However, this is no longer the case for addition, see for instance \cite[Example 7.4]{A0}.\\

In the world of conic-line arrangements in the plane, it is very natural to wonder whether we can use similar addition-deletion techniques to construct new examples of free or plus-one generated arrangements. The main difference is based on the fact that we can add/delete either a conic or a line. By \cite{dim2, MP}, if we have a free conic-line arrangement and we add or delete a line, then the resulting arrangement is either free or plus-one generated. On the other hand, with the  addition of either a conic or a line, we can get the whole spectrum of possibilities. Let us illustrate this slogan with the following two examples.
\begin{example}
\label{XR}
Let us consider the conic-line arrangement $\mathcal{XR}\subset \mathbb{P}^{2}_{\mathbb{C}}$ given by the following defining polynomial:
\begin{multline*}
    Q(x,y,z)= (x^{2} + 2xy + y^{2} - xz)(x^2 + xy + 2yz - z^2 )(x^2 + xz + yz)(x^2 + xy + z^2 ) \\ (x^2 + 2xy - xz + yz)(x^2 - y^2 + xz + 2yz)y(x + z)(x + y - z)(x + y + z)(x - z)\bigg(x + \frac{1}{2}y\bigg).
\end{multline*}
This arrangement has $9$ ordinary sixfold and $12$ double intersection points. It is also worth noticing that all singularities are quasi-homogeneous. Using \verb}SINGULAR} we can check that the arrangement $\mathcal{XR}$ is free with exponents $(d_{1},d_{2})=(4,13)$. Now we perform the addition technique. If we add to $\mathcal{XR}$ the line $\ell : x+2y+4z=0$, then the arrangement is plus-one generated with exponents $(d_{1},d_{2},d_{3})=(5,14,17)$. Looking more precisely, by adding the line $\ell$ we introduce $18$ additional double points to the resulting arrangement. 

However, using the same addition trick, we can obtain a free arrangement of conics and lines, and this can be achieved by adding to $\mathcal{XR}$, for instance, the line $\ell' : z=0$.
\end{example}
\begin{example}
\label{addit}
Let us consider a conic-line arrangement $\mathcal{CL}\subset \mathbb{P}^{2}_{\mathbb{C}}$ given by the following defining polynomial:
$$Q(x,y,z) = (x^2 + y^2 - z^2 )(y-z)(x^2 - z^2 ).$$
This arrangement is known to be free \cite[Example 4.14]{DimcaPokora}. Observe that it has $3$ nodes ($A_{1}$ singularities) and $3$ tacnodes ($A_{3}$ singularities), and this gives us $\tau(\mathcal{CL})=12$. If we add the conic $C \, : y^2-xz=0$ to $\mathcal{CL}$, then, using \verb}SINGULAR}, we can check that the resulting arrangement is only $4$-syzygy.
\end{example}

As we will see in next section, the addition-deletion techniques are not always sufficient in our classification considerations, mainly due to the fact that there are not that many known examples of free CL-arrangements with simple singularities. For example, there are no free conic arrangements with only nodes and tacnodes as singularities \cite[Proposition 1.5]{DJP}, but there are plus-one generated conic arrangements with nodes and tacnodes. This is the main reason why we are forced to use different techniques to obtain our results. On the other hand, our combinatorial techniques can be applied more generally, so we hope that they will be useful in further research.
\section{Plus-one generated arrangements with certain ${\rm ADE}$ singularities}
\label{sec:main}
 Our aim here is to provide a degree-wise characterization of plus-one generated with some prescribed ${\rm ADE}$ singularities. We start with arrangements of conics in the plane. Our result is inspired by a recent paper due to Dimca, Janasz, and the second author devoted to conic arrangements in the plane admitting nodes and tacnodes \cite{DJP}.
\begin{theorem}
\label{thm:C_arr}
Let $\mathcal{C} \subset \mathbb{P}^{2}_{\mathbb{C}}$ be an arrangement of $k\geq 2$ smooth conics such that they admit $n_{2}$ nodes, $t_{2}$ tacnodes, and $n_{3}$ ordinary triple points. Assume that $\mathcal{C}$ is plus-one generated with the defect $\nu(\mathcal{C})=2$, then $k \in \{2,3,4\}$.
\end{theorem}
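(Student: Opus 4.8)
The plan is to combine the numerical identities coming from the singularities with the Dimca--Sticlaru formula of Proposition~\ref{dimspl}, and then to rule out large $k$ by a Hirzebruch-type (orbifold Bogomolov--Miyaoka--Yau) inequality. Write $d=2k$ for the degree of $\mathcal{C}$. Since all singularities are quasi-homogeneous, the total Tjurina number equals the total Milnor number, and reading off the local contributions of the allowed simple singularities ($\tau_p=1$ for a node $A_1$, $\tau_p=3$ for a tacnode $A_3$, $\tau_p=4$ for an ordinary triple point $D_4$) gives
\[
\tau(\mathcal{C})=n_2+3t_2+4n_3 .
\]
On the other hand, applying B\'ezout to each pair of conics and summing the local intersection multiplicities carried by the singular points (a node uses $1$, a tacnode uses $2$, a triple point uses $3$) yields
\[
n_2+2t_2+3n_3=4\binom{k}{2}=2k(k-1).
\]
Subtracting, the node count cancels and I obtain the clean relation $\tau(\mathcal{C})=2k(k-1)+t_2+n_3$.

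Second, I would feed this into Proposition~\ref{dimspl}. As $\mathcal{C}$ is POG with $\nu(\mathcal{C})=2$, one has $d_1+d_2=d=2k$ and $d_3=d_2+1$, so the formula reads $\tau(\mathcal{C})=(2k-1)^2-d_1(2k-1-d_1)-2$. Equating the two expressions for $\tau(\mathcal{C})$ turns the identity into a single equation expressing $t_2+n_3$ as a quadratic function of $d_1$, namely $t_2+n_3=d_1^2-(2k-1)d_1+(2k^2-2k-1)$. Imposing $d_1\le d_2$ (so $1\le d_1\le k$) together with the non-negativity of $n_2,t_2,n_3$ should then pin the solution down tightly: the requirement $n_2\ge 0$ forces $d_1^2-(2k-1)d_1+(2k^2-2k-1)\le k(k-1)$, a quadratic inequality with discriminant $5$ whose only integer solutions in $[1,k]$ are $d_1=k-1$ and $d_1=k$. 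In both cases $t_2+n_3=k^2-k-1$, and back-substitution gives $n_2=2-n_3$ with $n_3\in\{0,1,2\}$; in particular $t_2=k^2-k-1-n_3$ grows quadratically in $k$.

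The heart of the proof is then to show that such a large number of tacnodes cannot be realised once $k\ge 5$. The naive B\'ezout bound only yields $t_2\le k(k-1)$, which is too weak, so I would instead invoke a Hirzebruch-type inequality (equivalently the logarithmic/orbifold Bogomolov--Miyaoka--Yau inequality) for arrangements of smooth conics with quasi-homogeneous ADE singularities. The point is that in such an inequality a tacnode is weighted strictly more heavily than a node, whereas the ``budget'' on the left-hand side grows only like a fixed multiple of $k^2$ coming from $(K_{\mathbb{P}^2}+\mathcal{C})^2$; feeding in the forced value $t_2=k^2-k-1-n_3$ with $n_3\le 2$ should violate the inequality for every $k\ge 5$, leaving $k\in\{2,3,4\}$.

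I expect this last step to be the main obstacle: one must have at hand (or establish) a Hirzebruch-type inequality sharp enough that the quadratic tacnode term overwhelms the quadratic Chern budget precisely at $k=5$, rather than merely for $k\gg 0$. It is also here that the comparison with \cite{DJP}, where only nodes and tacnodes occur, is useful, since the allowed presence of triple points must be tracked through the weights. Finally, to see that each of $k=2,3,4$ genuinely occurs (and not merely survives the numerics), I would exhibit explicit POG conic arrangements realising the forced singularity vectors $(n_2,t_2,n_3)$ above, which is presumably where the examples promised in the introduction enter.
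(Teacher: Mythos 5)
Your plan is correct and is essentially the paper's proof: Dimca--Sticlaru (Proposition \ref{dimspl}) plus the B\'ezout count \eqref{combcount} to constrain the weak combinatorics, a Hirzebruch-type inequality to exclude large $k$, and explicit examples for $k\in\{2,3,4\}$. The two points where you diverge from the paper are both sound and, if anything, cleaner. First, you exploit integrality of $d_1$: the quadratic inequality forced by $n_2\ge 0$ has roots $\frac{(2k-1)\pm\sqrt{5}}{2}$, so indeed $d_1\in\{k-1,k\}$ and exactly $t_2+n_3=k^2-k-1$, $n_2+n_3=2$. The paper instead only imposes non-negativity of the discriminant of the same quadratic, which yields the weaker statements $t_2+n_3\ge k^2-k-\frac54$, $n_2+n_3\le 2$, $t_2\ge k(k-1)-3$; it then needs Miyaoka's bound \eqref{tacM} to get $k\le 5$, and separately the Hirzebruch-type inequality \eqref{hirzpok} from \cite{Pokora} to eliminate the one surviving combinatorics $(n_2,t_2,n_3)=(0,17,2)$ at $k=5$. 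Second, the inequality whose existence you flag as the main obstacle is precisely \eqref{hirzpok}: $8k+n_2+\frac34 n_3\ge \frac52 t_2$, valid for arrangements of $k\ge 3$ smooth conics with these singularities. Substituting your forced values $t_2=k^2-k-1-n_3$, $n_2=2-n_3$, $n_3\le 2$ turns it into $\frac52 k^2-\frac{21}{2}k-\frac92-\frac94 n_3\le 0$, which fails for every $k\ge 5$; so your single-inequality endgame does close exactly at $k=5$ and renders the Miyaoka step unnecessary. Finally, as you anticipated, existence for $k=2,3,4$ requires explicit constructions: the paper exhibits a pair of conics (checked with SINGULAR) and Megyesi's configurations \cite{GM} of three conics with five tacnodes and four conics with eleven tacnodes; note that all of these realize your forced combinatorics with $n_3=0$, i.e.\ $(n_2,t_2)=(2,k^2-k-1)$, which confirms your bookkeeping.
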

\begin{proof}
Using Proposition \ref{dimspl}, if $\mathcal{C}$ is plus-one generated of degree $d=2k$ with $k\geq 2$ and $\nu(\mathcal{C})=2$, then we have
$$d_{1}^{2}-d_{1}(2k-1)+(2k-1)^2 = \tau(\mathcal{C})+\nu(\mathcal{C}) = n_{2} + 3t_{2} + 4n_{3}+2.$$
Recall that we have the following combinatorial count:
\begin{equation}
\label{combcount}
4\cdot \binom{k}{2} = n_{2} + 2t_{2}+3n_{3}.
\end{equation}
Combining these two equations we get
$$d_{1}^{2}-d_{1}(2k-1)+(2k-1)^{2} = t_{2} + n_{3} + 2(k^{2}-k +1).$$
Simple manipulations lead us to
\begin{equation}
\label{disc}
d_{1}^{2}-d_{1}(2k-1) + 2k^{2}-2k - 1 - (t_{2}+n_{3})=0.
\end{equation}
If $\mathcal{C}$ is plus-one generated, then the discriminant $\triangle_{d_{1}}$ of \eqref{disc} satisfies
$$\triangle_{d_{1}} = (2k-1)^{2}-4\bigg(2k^{2}-2k-1 -(t_{2}+n_{3})\bigg) \geq 0.$$
This gives us
$$t_{2}+n_{3}\geq k^{2}-k-\frac{5}{4}.$$
Observe that
$$4\cdot\binom{k}{2} = 2(k^{2}-k) = n_{2} + n_{3} +2(t_{2}+n_{3}) \geq n_{2}+n_{3} + 2(k^{2}-k) - \frac{5}{2},$$
and we finally get
$$0 \leq n_{2} + n_{3} \leq 2.$$
On the other hand, 
$$4\cdot \binom{k}{2} = n_{2} + 2t_{2} + 3n_{3} \leq 2t_{2} + 3(n_{2}+n_{3}) \leq 2t_{2} + 6,$$
so we have
\begin{equation}
\label{t2}
t_{2}\geq k(k-1)-3.
\end{equation}
Using these combinatorial constraints we see that for $k=2$ one has $t_{2}\geq 0$ and $n_{2}+n_{3}\leq 2$, and we will return to this case in a moment. Now suppose that $k\geq 3$. Using the same argument as in \cite[Remark 3.3]{Pokora} (including the data on the triple intersections), we can observe that
\begin{equation}
\label{tacM}
t_{2} \leq \frac{4}{9}k^{2} + \frac{4}{3}k,
\end{equation}
and we arrive at the following chain of inequalities:
$$k(k-1)-3\leq t_{2} \leq \frac{4}{9}k^{2} + \frac{4}{3}k.$$
This gives us that $k \leq 5$.
Let us consider the case $k=5$. Using our bound on the number of tacnodes, we obtain
$$t_{2} \geq k(k-1) -3 = 17.$$
By \eqref{tacM} we see that for $k=5$ we have $t_{2} \leq 17$, so from now on we will assume that $\mathcal{C}$ is plus-one generated such that $t_{2}=17$. Note that the following weak combinatorics can only occur:
$$(n_{2},t_{2},n_{3}) \in \{(6,17,0), (3,17,1), (0,17,2)\},$$
so in order to get a plus-one generated example one needs to decide whether there exists an arrangement of $k=5$ conics with $t_{2}=17$ and $n_{3}=2$. By \cite[Theorem B]{Pokora}, the following Hirzebruch-type inequality holds (when $k \geq 3)$:
\begin{equation}
\label{hirzpok}
8k + n_{2} + \frac{3}{4}n_{3}\geq \frac{5}{2}t_{2}.
\end{equation}
However, if we plug $(k; n_{2},t_{2},n_{3}) = (5;0,17,2)$ into \eqref{hirzpok}, then we get a contradiction, which means that such an arrangement cannot exist.

To complete our proof, we need to show that for each $k \in \{2,3,4\}$ we have an example of a plus-one generated arrangement.
\begin{itemize}
\item[$(k=2)$] In this case we should have $n_{2}+n_{3}\leq 2$ and $t_{2}\geq 0$. If we assume that $t_{2}=2$, then the arrangement is nearly-free by \cite{DJP}, so we can exclude this case. The next possible case is $t_{2}=1$ and $n_{2}=2$, and in this situation we have $\tau(\mathcal{C})=5$. We show that such a weak combinatorics leads to a plus-one generated example. Let us take
$$\mathcal{Q}(x,y,z) =(x^2 + y^2 - z^2 )\cdot\bigg(x^2 - \frac{13}{10}xz + \frac{36}{10}y^2 - \frac{23}{10}z^2 \bigg). $$ 
as the defining equation of our arrangement. We can check using \verb}SINGULAR} that $(d_{1},d_{2},d_{3}) = (2,2,3)$, and this implies that $\mathcal{C}$ is plus-one generated.
\item[$(k=3)$] Using \cite[Proposition 5]{GM}, if we have an arrangement of $3$ conics with $5$ tacnodes, then these three conics are projectively equivalent to the three conics given by the equations:
$$\quad \quad \quad x^{2}+y^{2}-z^{2}=0, \, \ell^{2}x^{2}+(\ell^{2}+1)y^{2}-2\ell yz =0, \, m^{2}x^{2}+(m^{2}+1)y^{2}-2m yz = 0,$$
where $\ell, m \in \mathbb{C}\setminus \{0, \pm 1\}$, $\ell\neq m$, $\ell m\neq 1$. Let us take $\ell = 2$, $m=-2$, and denote by $Q(x,y,z)$ the defining equations of these three conics. Then $\tau(\mathcal{C})= 17$, since $t_{2}=5$, $n_{2}=2$, and $(d_{1},d_{2},d_{3}) = (3,3,4)$, so $\mathcal{C}$ is plus-one generated.
\item[$(k=4)$] Using \cite[Proposition 7 b)]{GM}, if we have an arrangement of $4$ conics with $11$ tacnodes, then these four conics are projectively equivalent to the conics given by the following equations:
$$x^2 + y^2 - z^2 =0, \, x^2 + r^{2}y^2 - r^{2}z^2=0, \, x^2 + (r^{2}+1)y^{2} \pm 2ryz,$$
where $r \in \mathbb{C} \setminus \{0,\pm 1, \pm \iota\}$. Let us take $r=2$ and denote by $Q(x,y,z)$ the defining equation of our arrangement $\mathcal{C}$. We have $\tau(\mathcal{C})=35$, since $t_{2}=11$ and $n_{2}=2$. Using \verb}Singular} we can check that $(d_{1},d_{2},d_{3}) = (4,4,5)$, which tells us that $\mathcal{C}$ is plus-one generated.
\end{itemize}
This completes the proof.
\end{proof}

 If we allow to have arrangements of conics and lines, then we can find more examples of plus-one generated arrangements. Let us start with the case when we have only double intersection points.
\begin{proposition}
\label{double}
Let $\mathcal{CL} \subset \mathbb{P}^{2}_{\mathbb{C}}$ be an arrangement consisting of $k\geq 1$ conics and $d\geq 1$ lines. Assume that $\mathcal{CL}$ has only $n_{2}$ double intersection points and is plus-one generated with $d_{3} > d_{2}$. Then $(k,d;n_{2})=(1,2;5)$. In other words, we have exactly one weak combinatorics for conic-line arrangements with only double intersection points that leads to a plus-one generated arrangement.
\end{proposition}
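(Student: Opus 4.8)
The plan is to run the discriminant strategy from the proof of Theorem~\ref{thm:C_arr}, now adapted to the mixed conic--line setting and to the hypothesis that every singular point is a node. Write $D = 2k + d$ for the total degree of $\mathcal{CL}$. First I would turn the ``only double points'' hypothesis into an exact combinatorial count. A node is an ordinary transverse meeting of exactly two smooth local branches, so the hypothesis forbids every tangency and every point lying on three or more components; hence, by B\'ezout, each pair of components meets in $\deg C_i\cdot\deg C_j$ distinct nodes and no two pairs share a point. Summing over pairs yields
$$n_2 \;=\; \sum_{i<j}\deg C_i\deg C_j \;=\; \frac{1}{2}\Big(D^2 - \sum_i(\deg C_i)^2\Big) \;=\; \frac{1}{2}\big(D^2 - 4k - d\big),$$
and since all singularities are quasi-homogeneous nodes we have $\tau(\mathcal{CL}) = n_2$.

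Next I would substitute this into Proposition~\ref{dimspl}. Taking the defect to be $\nu(\mathcal{CL}) = 2$, as throughout the paper, the characterization becomes the quadratic
$$d_1^2 - (D-1)d_1 + \big[(D-1)^2 - n_2 - 2\big] \;=\; 0$$
in the exponent $d_1$. Imposing that this has a real root (nonnegative discriminant) and substituting $n_2 = \tfrac12(D^2 - 2D + d)$, which follows from $4k + d = 2D - d$, I expect to reach $D^2 - 2D - 5 \le 2d$. Since $k\ge 1$ gives $d = D - 2k \le D - 2$, this collapses to $D^2 - 4D - 1 \le 0$, so $D \le 4$; with $k,d\ge 1$ only $D\in\{3,4\}$ remain.

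The rest is a finite check. For $D = 3$ the only admissible pair is $(k,d) = (1,1)$ with $n_2 = 2$, and the quadratic factors with roots $d_1\in\{0,2\}$, neither lying in the allowed range $1\le d_1\le D/2$; thus no plus-one generated arrangement occurs. For $D = 4$ the only admissible pair is $(k,d)=(1,2)$, which gives $n_2 = 5$ and the quadratic $d_1^2 - 3d_1 + 2 = 0$; the root $d_1 = 1$ yields $(d_1,d_2,d_3) = (1,3,4)$ with $d_3>d_2$, exactly the asserted combinatorics $(k,d;n_2) = (1,2;5)$. To complete the argument I would exhibit an explicit realization, for instance the smooth conic and two transverse lines $(x^2+y^2-z^2)\cdot x\cdot y$, which has precisely $5$ nodes, and verify by a \verb}SINGULAR} computation (or directly through Proposition~\ref{dimspl}) that it is genuinely plus-one generated.

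The step I expect to be most delicate is the interface between the numerics and the geometry, rather than the numerics themselves. The exact value of $n_2$ rests entirely on the transversality and distinctness forced by the nodal hypothesis, which has to be justified with care. For existence, one must certify that the candidate $(1,2;5)$ arrangement really is a $3$-syzygy, plus-one generated curve; its Tjurina number $\tau = 5$ already excludes the free and nearly-free alternatives, so only the $3$-syzygy property needs checking. Finally, I would stress that the discriminant bound closes cleanly precisely because the defect is pinned to $\nu = 2$: if one permitted $d_3 > d_2$ with arbitrary $\nu$, numerically consistent configurations at $D = 5$ (such as $(k,d) = (1,3)$ with exponents $(2,3,5)$) would survive, and ruling these out would demand a separate non-realizability argument.
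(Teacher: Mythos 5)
Your numerics are internally consistent, but they prove a weaker statement than the one asserted. The proposition assumes only that $\mathcal{CL}$ is plus-one generated with $d_3 > d_2$, i.e.\ defect $\nu(\mathcal{CL}) = d_3 - d_2 + 1 \geq 2$, whereas your quadratic
$$d_1^2 - (D-1)d_1 + \big[(D-1)^2 - n_2 - 2\big] = 0$$
hard-codes $\nu = 2$ (your phrase ``as throughout the paper'' imports an assumption that this particular proposition does not make). You acknowledge the consequence yourself in your last paragraph, and the problem is not hypothetical: for $(k,d) = (1,3)$, so $D = 5$ and $n_2 = 9$, the putative exponents $(d_1,d_2,d_3) = (2,3,5)$ have $\nu = 3$ and satisfy $\tau = (D-1)^2 - d_1(D-1-d_1) - \nu = 16 - 4 - 3 = 9 = n_2$, together with $d_1 \le D/2$ and $d_3 > d_2$. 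So this configuration passes every test in your argument, and ruling it out is not an optional refinement --- it is precisely the content of the proposition for all degrees $D \geq 5$. As written, your proof establishes the classification only within the subclass $\nu = 2$.

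The paper closes exactly this gap with a different tool, and this is the idea your proposal is missing: by \cite[Theorem 2.1]{DimcaSernesi}, since every singularity is a node and the log-canonical threshold of a node equals $1$, the minimal degree of a syzygy satisfies $d_1 \geq m-2$; on the other hand plus-one generation gives $d_1 \leq d_2$ with $d_1 + d_2 = m$, hence $d_1 \leq m/2$. The two bounds force $m \leq 4$ with no hypothesis on the defect at all, after which the finite check proceeds as in your proposal ($m=3$ is nearly-free, hence excluded by $d_3>d_2$; $m=4$ is realized by $xy(x^2+y^2-z^2)$). In the $D=5$ example above, this bound gives $d_1 \geq 3 > 5/2$, an immediate contradiction. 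A small additional correction: your claim that the realization has exponents $(1,3,4)$ is wrong; the arrangement $xy(x^2+y^2-z^2)$ has $(d_1,d_2,d_3) = (2,2,3)$, i.e.\ the \emph{other} root of your quadratic is the one realized. This does not affect the combinatorial conclusion $(k,d;n_2)=(1,2;5)$, but it illustrates that the discriminant argument cannot by itself identify which exponents occur geometrically.
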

\begin{proof}
Denote by $m=2k+d$ the degree of the arrangement. Since $\mathcal{CL}$ is plus-one generated with the exponents $(d_{1},d_{2},d_{3})$ such that $d_{1}\leq d_{2} <  d_{3}$, and $d_{1}+d_{2} = m$, then by \cite[Theorem 2.1]{DimcaSernesi} one has
$$\frac{m}{2} \geq d_{1} \geq m-2,$$
and this follows from the fact that the log-canonical threshold for nodes is equal to $1$. This implies that $3 \leq m\leq 4$. If $m=3$, then we have $k=1$ and $d=1$, and an easy inspection shows us that the only possible case is to have $n_{2}=2$. However, such an arrangement is nearly-free, i.e., $d_{3}=d_{2}$, so we exclude this case. Let us pass to the case when $m=4$. It means that we have $k=1$ and $d=2$. Using the B\'ezout theorem, we must have exactly five nodes. Now we are going to give a geometric realization of the weak combinatorics $(k,d;n_{2}) = (1,2;5)$. Let us consider arrangement $\mathcal{C}$ defined by the following polynomial:
$$Q(x,y,z) = xy(x^{2} + y^{2} - z^{2}).$$
We have exactly $n_{2}=5$ nodes, and using \verb}SINGULAR} we can check that
$$(d_{1},d_{2},d_{3})=(2,2,3),$$ so $\mathcal{C}$ is plus-one generated.
\end{proof}
Now we pass to arrangements of $k\geq 1$ conics and $d\geq 1$ lines such that these admit $n_{2}$ nodes, $t_{2}$ tacnodes, and $n_{3}$ ordinary triple points. We will need the following general result.
\begin{proposition}
\label{degreeb}
Let $C \, : f = 0$ be a plus-one generated curve of degree $m$ admitting only nodes, tacnodes, and ordinary triple points as singularities. Then
$$\frac{m}{2}\geq d_{1} \geq \frac{2}{3}m-2.$$
In particular, we have that $m\leq 12$.
\end{proposition}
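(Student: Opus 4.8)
The plan is to run the same argument as in Proposition~\ref{double}, with the node replaced by the singularity of smallest log-canonical threshold among those allowed. Set $d_1 = \mathrm{mdr}(f)$, the minimal degree of a relation in $D_0(C)$. Everything reduces to a two-sided estimate on $d_1$ coming from \cite[Theorem~2.1]{DimcaSernesi}: for a reduced plane curve of degree $m$ one has
$$\frac{m}{2} \ge d_1 \ge m\,\hat\alpha_C - 2,$$
where $\hat\alpha_C = \min_{p \in \mathrm{Sing}(C)} \mathrm{lct}_p(f)$ is the minimal local log-canonical threshold of $C$. The upper bound $d_1 \le m/2$ is the part of the cited statement that does not see the singularities, while the lower bound is the part that does.

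First I would compute the local log-canonical thresholds of the three admissible singularity types. A node is an $A_1$-singularity with $\mathrm{lct} = 1$; a tacnode is an $A_3$-singularity, for which $\mathrm{lct} = \tfrac12 + \tfrac14 = \tfrac34$; and an ordinary triple point is a $D_4$-singularity given by a reduced homogeneous cubic, for which $\mathrm{lct} = \tfrac23$. Thus the smallest value is attained at a triple point, and in every case $\hat\alpha_C \ge \tfrac23$.

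Substituting $\hat\alpha_C \ge 2/3$ into the lower bound yields $d_1 \ge \tfrac{2}{3}m - 2$, which combined with $d_1 \le m/2$ is precisely the asserted inequality $\tfrac{m}{2} \ge d_1 \ge \tfrac{2}{3}m - 2$. For the final clause, combining the two outer bounds gives $\tfrac{m}{2} \ge \tfrac{2}{3}m - 2$, i.e.\ $\tfrac{m}{6} \le 2$, so $m \le 12$.

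The one genuinely non-formal step is the log-canonical threshold bookkeeping: identifying each singularity with its $\mathrm{ADE}$ type ($A_1, A_3, D_4$), computing the three thresholds, and checking that their minimum is $2/3$, so that the uniform lower bound $d_1 \ge \tfrac{2}{3}m - 2$ holds regardless of which types actually occur. I would also make sure that the form of \cite[Theorem~2.1]{DimcaSernesi} used here applies to $3$-syzygy curves as stated, so that the upper bound $d_1 \le m/2$ is justified without assuming in addition that $C$ is plus-one generated; this is the point most worth verifying against the reference.
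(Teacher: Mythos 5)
The paper's own ``proof'' of this proposition is a one-line citation to \cite[Proposition 4.7]{DimcaPokora}, and your reconstruction of the \emph{lower} bound is exactly the argument underlying that reference: nodes, tacnodes and ordinary triple points are the quasi-homogeneous singularities $A_1$, $A_3$, $D_4$, their log-canonical thresholds (equivalently, Arnold exponents) are $1$, $\tfrac34$, $\tfrac23$, so $\hat\alpha_C \ge \tfrac23$, and the vanishing theorem of \cite{DimcaSernesi} gives $d_1 \ge \tfrac{2}{3}m - 2$. That half of your proposal is correct, and it applies to any reduced curve with these singularities, 3-syzygy or not.

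The genuine gap is precisely the point you flagged for verification: the upper bound $d_1 \le \tfrac{m}{2}$. \cite[Theorem 2.1]{DimcaSernesi} is a one-sided statement (a vanishing result for low-degree Jacobian syzygies, i.e.\ only the lower bound); it contains no upper bound on $d_1$, and no ``singularity-blind'' bound $d_1 \le \tfrac{m}{2}$ can hold for reduced plane curves. Indeed, a smooth curve of degree $m$ is itself a 3-syzygy curve (its Milnor algebra is resolved by the Koszul complex, so $d_1 = d_2 = d_3 = m-1$) and vacuously satisfies the hypothesis on singularities, yet $d_1 = m-1 > \tfrac{m}{2}$ for $m \ge 3$. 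More structurally, the 3-syzygy hypothesis pushes in the \emph{wrong} direction: minimality of the resolution forces $d_1 + d_2 \ge m$, with equality characterizing the plus-one generated case (cf.\ the Remark after Definition \ref{def:POG_curves}), so nothing in ``3-syzygy'' prevents $d_1 > \tfrac{m}{2}$. What actually yields $d_1 \le \tfrac{m}{2}$ is the stronger hypothesis that $C$ is free, nearly free, or plus-one generated, since then $d_1 + d_2 \le m$ together with $d_1 \le d_2$ gives the bound --- this is exactly how it is obtained in the proof of Proposition \ref{double}, and it is the only setting in which the paper applies Proposition \ref{degreeb} (the curve in Theorem \ref{thm:CL_arr} is assumed POG). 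To close your argument you must either add that hypothesis when deriving the upper bound, or take the upper bound from \cite[Proposition 4.7]{DimcaPokora} with whatever hypotheses that proposition actually carries; as written, the deferred verification fails.
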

\begin{proof}
It follows from \cite[Proposition 4.7]{DimcaPokora}.
\end{proof}
Using this result, we can provide a degree-wise classification of a certain class of plus-one generated conic-line arrangements.
\begin{theorem}
\label{thm:CL_arr}
Let $\mathcal{CL} \subset \mathbb{P}^{2}_{\mathbb{C}}$ be an arrangement of $k\geq 1$ conics and $d\geq 1$ lines admitting only $n_{2}$ nodes, $t_{2}$ tacnodes, and $n_{3}$ ordinary triple points. Assume furthermore that $\mathcal{CL}$ is plus-one generated with $\nu(\mathcal{CL})=2$. Then $m:=2k+d \in \{4,5,6,7,8,9,10\}$, possibly except the cases $m=9$ or $m=10$.
\end{theorem}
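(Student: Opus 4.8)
The plan is to run the same engine as in the proof of Theorem~\ref{thm:C_arr}, now with the degree $m=2k+d$ as the main variable and with Proposition~\ref{degreeb} supplying the crucial \emph{a priori} bound. Since every singularity is quasi-homogeneous and of type $A_1$ (node), $A_3$ (tacnode) or $D_4$ (ordinary triple point), with local Tjurina numbers $1,3,4$, the total Tjurina number is $\tau(\mathcal{CL})=n_2+3t_2+4n_3$. Two global identities drive the argument. First, the B\'ezout count $\sum_{i<j}\deg C_i\deg C_j=\tfrac12\big(m^2-(4k+d)\big)=\binom{m}{2}-k$, together with the fact that an $A_1$, $A_3$, $D_4$ point absorbs intersection multiplicity $1,2,3$ respectively, gives the combinatorial count
\[
n_2+2t_2+3n_3=\binom{m}{2}-k .
\]
Second, Proposition~\ref{dimspl} applied with $\nu(\mathcal{CL})=2$ and $d_1+d_2=m$ yields
\[
\tau(\mathcal{CL})+2=(m-1)^2-d_1(m-1-d_1).
\]

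The degree bound is then immediate: Proposition~\ref{degreeb} gives $\tfrac23 m-2\le d_1\le \tfrac{m}{2}$, hence $m\le 12$. Moreover this interval contains no integer when $m=11$, so $m=11$ is excluded outright, while for every other value of $m$ it pins $d_1$ down to a single integer. It remains to discard $m=3$ and $m=12$. For $m=3$ one has $k=d=1$ and $d_1=1$, and the two identities force $\tau=1$; but a smooth conic and a line meet either in two nodes ($\tau=2$) or in one tacnode ($\tau=3$), so this case is impossible and $m\ge4$.

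The core of the proof is the exclusion of $m=12$, where $d_1=6$ is forced and the second identity gives $\tau(\mathcal{CL})=89$. Subtracting the combinatorial count from $\tau=n_2+3t_2+4n_3$ yields $t_2+n_3=23+k$, and the constraint $n_2\ge 0$ then forces $3k+n_3\le 20$, so that $t_2\ge 4k+3$ for every admissible splitting $2k+d=12$ with $1\le k\le 5$. To turn this forced abundance of tacnodes into a contradiction I would invoke a Hirzebruch-type and a Miyaoka-type inequality for conic-line arrangements, the analogues of \eqref{hirzpok} and \eqref{tacM}, which bound $t_2$ from above by a quadratic expression in $(k,d)$; feeding the lower bound $t_2\ge 4k+3$ into such an inequality should fail for each of the finitely many profiles with $2k+d=12$. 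The main obstacle is precisely to obtain (or locate in the literature) the sharp conic-line versions of these inequalities, since the naive tangency bound $t_2\le 2\binom{k}{2}+kd=k(11-k)$ is far too weak to close the case. By contrast, the same computation for $m=9,10$ produces only the mild demands $t_2+n_3=10+k$ and $14+k$, which remain within the scope of the available inequalities; this is exactly why these two values resist the method and appear as the exceptional cases in the statement.

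Finally, to show that the surviving range is genuinely populated I would exhibit, for each $m\in\{4,5,6,7,8\}$, an explicit plus-one generated conic-line arrangement of defect $2$ realizing an admissible singularity profile, verifying $(d_1,d_2,d_3)$ with $d_3=d_2+1$ by \texttt{SINGULAR} exactly as in Theorem~\ref{thm:C_arr} and Proposition~\ref{double}; the arrangement $xy(x^2+y^2-z^2)$ of Proposition~\ref{double} already realizes $m=4$. Together with the two undecided values $m=9,10$, this yields the claimed range.
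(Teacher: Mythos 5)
Your skeleton follows the paper's argument almost step for step: the same two identities (the B\'ezout count $n_2+2t_2+3n_3=\binom{m}{2}-k$ and Proposition \ref{dimspl} with $d_1+d_2=m$), Proposition \ref{degreeb} for $m\le 12$, the integer-gap argument for $m=11$, and the reduction of $m=12$ to a forced abundance of tacnodes ($t_2+n_3=23+k$, $n_2+n_3=20-3k$, hence $t_2\ge 4k+3$), which is exactly the paper's starting point. But the step you yourself flag as ``the main obstacle'' is a genuine gap, and it is the heart of the case: without a Hirzebruch-type inequality that sees the lines as well as the conics, $m=12$ stays open. Neither \eqref{hirzpok} (stated for arrangements of conics only, with $k\ge 3$) nor Miyaoka's bound \eqref{tacM} applies to a conic-line arrangement with $k\le 5$ conics, so you cannot close the case with the tools quoted from Theorem \ref{thm:C_arr}. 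The inequality you need does exist in the literature: Ga\l ecka's result \cite[Proposition 4.4]{Gal} gives $8k+n_2+\frac{3}{4}n_3\ge d+\frac{5}{2}t_2$ for such conic-line arrangements. Feeding your own reduction into it finishes the proof at once: $5k+20=8k+n_2+n_3\ge 8k+n_2+\frac{3}{4}n_3\ge d+\frac{5}{2}t_2\ge (12-2k)+\frac{5}{2}(4k+3)=8k+\frac{39}{2}$, forcing $k\le\frac{1}{6}$, contradicting $k\ge 1$. (The paper runs the same computation in tabulated form, bounding $t_2\le 2k+8-\frac{2}{5}d$ and $n_3\le 20-3k$ and comparing with $t_2+n_3=23+k$.) So your strategy is the right one, but as written the key lemma is missing rather than proved or cited.

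A second, lesser, gap is the realizability half: for $m\in\{5,6,7,8\}$ you promise examples but produce none, whereas the paper exhibits explicit arrangements, e.g.\ $xy(x-y)(x^2+y^2-z^2)$ for $m=5$, $x(y-x)(y+x)(x^2+y^2-z^2)\left(y-\frac{\sqrt{2}}{2}z\right)$ for $m=6$, the same with the extra line $y+\frac{\sqrt{2}}{2}z=0$ for $m=7$, and $(x-y)(x+y)(x-z)(x+z)(y-z)(y+z)(x^2+y^2-z^2)$ for $m=8$, each checked with \texttt{SINGULAR} to satisfy $d_3=d_2+1$. If the theorem is read purely as the upper bound $m\le 10$ these examples are not logically required, but they are what makes the values $4$ through $8$ genuinely part of the classification, so a finished proof should include them. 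Two small remarks: your exclusion of $m=3$ via the forced value $\tau=1$ is cleaner than the paper's (which defers to Proposition \ref{double}, strictly covering only the nodal case), and your claim that the interval of Proposition \ref{degreeb} pins $d_1$ to a single integer ``for every other value of $m$'' is false for $m\in\{4,6\}$, though this is harmless since those cases are settled by examples rather than exclusions.
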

\begin{proof}
Using Proposition \ref{degreeb} and Proposition \ref{double} we see that $m \in \{4, ..., 12\}$. 
We start with a degree-wise characterization. For $m=4$ we have a plus-one generated arrangement described in Proposition \ref{double}, so we are going to present constructions of plus-one generated conic-line arrangements in degrees $m\in \{5,6,7,8\}$ with types of singularities prescribed above.
\begin{itemize}
\item[$(m=5)$] Let $\mathcal{CL}_{5}$ be defined by the following polynomial:
$$Q(x,y,z) = x(x+y)(x-y)(x^2 + y^2 - z^2).$$ Here we have $n_{3}=1$ and $n_{2}=6$, so $\tau(\mathcal{CL}_{5})=10$. Then we can check directly, using \verb}SINGULAR}, that $(d_{1},d_{2},d_{3}) = (2,3,4)$, so $\mathcal{CL}_5$ is plus-one generated.
\item[$(m=6)$]  Let $\mathcal{CL}_{6}$ be defined by the following polynomial:
$$Q(x,y,z) = x(y-x)(y+x)(x^2 + y^2 - z^2)\bigg(y-\frac{\sqrt{2}}{2}z\bigg).$$ 
Here we have $n_{3}=3$ and $n_{2}=5$, so $\tau(\mathcal{CL}_{6})=17$. Then we can check directly, using \verb}SINGULAR}, that $(d_{1},d_{2},d_{3}) = (3,3,4)$, so $\mathcal{CL}_6$ is plus-one generated.
\item[$(m=7)$]  Let $\mathcal{CL}_{7}$ be defined by the following polynomial:
$$Q(x,y,z) = x(y-x)(y+x)(x^2 + y^2 - z^2)\bigg(y-\frac{\sqrt{2}}{2}z\bigg)\bigg(y+\frac{\sqrt{2}}{2}z\bigg).$$ 
Here we have $n_{3}=5$ and $n_{2}=5$, so $\tau(\mathcal{CL}_{6})=25$. Then we can check directly, using \verb}SINGULAR}, that $(d_{1},d_{2},d_{3}) = (3,4,5)$, so $\mathcal{CL}_7$ is plus-one generated.
\item[$(m=8)$] Let us consider the arrangement $\mathcal{CL}_{8}$ that is given by the following defining polynomial:
$$Q(x,y,z) = (x-y)(x+y)(x-z)(x+z)(y-z)(y+z)(x^{2} + y^{2} - z^{2}).$$ 
It is easy to see that we have $n_{2}=7$, $t_{2}=4$ and $n_{3}=4$, which gives us $\tau(\mathcal{CL}_{8}) = 35$. Using \verb}SINGULAR} we can check that $(d_{1},d_{2},d_{3}) = (4,4,5)$, so $\mathcal{CL}_{8}$ is plus-one generated.
\end{itemize}
Now we are going to exclude the existence of arrangements with $m \in \{11,12\}.$
\begin{itemize}
\item[$(m=11)$] Using Proposition \ref{degreeb}, we see that
$$\frac{11}{2} \geq d_{1} \geq \frac{22}{3}-2,$$
and since $d_{1} \in \mathbb{Z}_{\geq 0}$, we arrive at a contradiction.
\item[$(m=12)$] We are going to use two important combinatorial constraints. First of all, we have the naive combinatorial count:
\begin{equation}
\label{clcount}
\binom{12}{2} - k = \binom{m}{2}-k = n_{2}+2t_{2}+3n_{3}.
\end{equation}
Next, we will use the following Hirzebruch-type inequality \cite[Proposition 4.4]{Gal}:
\begin{equation}
\label{hirpp}
8k +n_{2} + n_{3} \geq 8k+n_{2} + \frac{3}{4}n_{3} \geq d+ \frac{5}{2}t_{2}.
\end{equation}
By the assumption our arrangements are plus-one generated, so using Proposition \ref{degreeb} we see that
$$6 = \frac{m}{2} \geq d_{1} \geq \frac{2}{3}\cdot 12 - 2 = 6,$$
so we arrive at the case $d_{1}=d_{2}=6$ and $d_{3}>6$. By Proposition \ref{dimspl} we get
\begin{equation}
\label{dpw}
89 = d_{1}^{2}-d_{1}(m-1)+(m-1)^2 - \nu(\mathcal{CL}) = n_{2}+3t_{2}+4n_{3}.
\end{equation}
Combining \eqref{dpw} with the (naive) combinatorial count, we arrive at
\begin{equation}
t_{2}+n_{3}=23+k, \quad n_{2}+n_{3}=20-3k.
\end{equation}
From Hirzebruch-type inequality we get
\begin{equation}
5k+20 = 8k + n_{2} + n_{3} \geq d + \frac{5}{2}t_{2},
\end{equation}
so we have found the following upper-bound on the number of tacnodes:
$$t_{2} \leq \frac{2}{5}\bigg(5k+20\bigg) - \frac{2}{5}d = 2k+8-\frac{2}{5}d.$$
Using the above constraints, we have the following possibilities:
\begin{center}
\begin{tabular}{ |c|c|c|c|c| } 
$k$ & $d$ & $n_{3}\leq$ & $t_{2}\leq $ &$n_{3}+t_{2}\leq $ \\
 \hline \hline
 1 & 10 & 17 & 6  & 23 \\ 
 2 &  8 & 14 & 8  & 22 \\ 
 3 &  6 & 11 & 11 & 22 \\
 4 &  4 & 8  & 14 & 22 \\ 
 5 &  2 & 5  & 17 & 22 \\
 \hline
\end{tabular}\,\,.
\end{center}
Since $t_{2}+n_{3} = 23 + k \geq 24$, we arrive at a contradiction.
\end{itemize}
\end{proof}

Here we would like give an interpretation of the sequence of inclusions 
$$\mathcal{CL}_5 \subset \mathcal{CL}_6 \subset \mathcal{CL}_7$$
of the examples constructed in Theorem \ref{thm:CL_arr}. This phenomenon can be explained by the next addition type result for plus-one generated conic-line arrangements.
Let us formulate our result in a broader setting, for curves $\mathcal{C}$ that have quasi-homogeneous singularities (i.e. for all singular points $p \in Sing(\mathcal{C})$ we have $\tau_{p}=\mu_{p}$), as it may be applied in further research.

\begin{prop}
\label{prop:POG_U_line_cond}
Let $\mathcal{C}'$ be a plus-one generated curve with defect $\nu(\mathcal{C}')=2$ and the exponents $(d_{1}',d_{2}')$, and let $L$ be a line that is not an irreducible component of $\mathcal{C}'$ such that the curve $\mathcal{C}:= \mathcal{C}' \cup L$ is again plus-one generated with defect $\nu(\mathcal{C})=2$. Suppose that both $\mathcal{C}'$ and $\mathcal{C}$ have quasi-homogeneous singularities. Then $|\mathcal{C}' \cap L| \in \{d'_1+1, \; d'_2\}$.
\end{prop}
\begin{proof}
Let us  denote by $\alpha_L$ a homogeneous linear form that defines the line $L$. By \cite[Theorem 1.6, Remark 1.8]{STY}, we have the following exact sequence of vector bundles
\begin{equation}
\label{eq:triple_sheaves}
0 \rightarrow \mathcal{E}_{\mathcal{C}'}(-1) \overset{\alpha_L}\longrightarrow  \mathcal{E}_{\mathcal{C}} \rightarrow \mathcal{O}_L(1-|\mathcal{C}' \cap L|) \rightarrow 0
\end{equation}
where $\cE_{\mathcal{C}}$ denotes the locally free sheaf (i.e. vector bundle) obtained by the sheafification of the graded module $D_0(\mathcal{C})$. We will compute $|\mathcal{C}' \cap L|$ via Chern numbers associated to the vector bundles in sequence \eqref{eq:triple_sheaves}. For a complex vector bundle $\mathcal{E}$, we denote by $c_i(\mathcal{E})$ is the $i$-th Chern number of $\mathcal{E}$.
The exact sequence \eqref{eq:triple_sheaves} implies the equality
\begin{equation}
\label{eq:chern2}
c_2(\mathcal{E}_{\mathcal{C}}) = c_2( \mathcal{E}_{\mathcal{C}'}(-1) ) + c_1( \mathcal{E}_{\mathcal{C}'}(-1) )c_1(\mathcal{O}_L(1-|\mathcal{C}' \cap L|)) + c_2(\mathcal{O}_L(1-|\mathcal{C}' \cap L|)),
\end{equation}
see for instance \cite[Section 3]{F} for details. 

If $\mathcal{C}$ is plus-one generated with exponents $(d_1, d_2)$ and level $d_3$, 
then we get, directly from Definition \ref{def:POG_curves}, formulas for the Chern numbers of the associated vector bundle $\mathcal{E}_{\mathcal{C}}$, in terms of the exponents and level. We will leave the details of this easy computation to the interested reader (check \cite{F} for the general theory and computations on Chern numbers). We have:
\begin{equation}
\label{eq:chern1_POG}
 c_1( \mathcal{E}_{\mathcal{C}}) = 1 - d_1 - d_2.
 \end{equation}
and 
\begin{equation}
\label{eq:chern2_POG}
c_2(\cE_{\mathcal{C}}) = d_1(d_2-1)+d_3-d_2+1.
\end{equation}

Since $\nu(\mathcal{C}') = \nu(\mathcal{C}) = 2$ in our hypothesis, we have $d'_3-d'_2 = d_3-d_2 = 1$.
Now we will substitute the formulas for Chern numbers \eqref{eq:chern1_POG} and \eqref{eq:chern2_POG} in the equality \eqref{eq:chern2}, taking into account also the formula for the Chern polynomial of $\mathcal{E}_{\mathcal{C}'}(-1) = \mathcal{E}_{\mathcal{C}'} \otimes _{\mathcal{O}_{\mathbb{P}^2}} \mathcal{O}_{\mathbb{P}^2}(-1)$ in terms of the Chern polynomial of $\mathcal{E}_{\mathcal{C}'}$, see again \cite[Section 3]{F}. We obtain the equality

$$c_2(\cE_{\mathcal{C}}) = (c_2(\cE_{\mathcal{C}'}) - c_1(\cE_{\mathcal{C}'})+ 1) + (-2+c_1(\cE_{\mathcal{C}'})) + c_2(\mathcal{O}_L(1-|\mathcal{C}' \cap L|)),$$
or explicitly
\begin{equation}
\label{eq:exponents}
d_1(d_2-1)+2 = \bigg(d'_1(d'_2-1)+2 - (1 - d'_1-d'_2) + 1 \bigg) + \bigg(-2+(1 - d'_1-d'_2)\bigg)+|\mathcal{C}' \cap L| .
\end{equation}
Observe that by \cite[Proposition 3.1]{DIS} we have $ d_1 \in \{d'_1, d'_1+1\}$. To complete our proof, let us first consider the case $d_1 = d'_1$. Then $d_2 = d'_2 +1$ and \eqref{eq:exponents} becomes 
$$|\mathcal{C}' \cap L| = d'_1+1.$$
Secondly, if  $d_1 = d'_1+1$, then $d_2 = d'_2 $, and  \eqref{eq:exponents} becomes 
$$|\mathcal{C}' \cap L| = d'_2.$$
In conclusion, we necessarily get $|\mathcal{C}' \cap L| \in \{d'_1+1, \; d'_2\}$, which completes the proof.
\end{proof}

\begin{remark}
\label{rem:POG_add}
The above Proposition \ref{prop:POG_U_line_cond} states, in particular, that one can obtain from a plus-one generated conic-line arrangement $\mathcal{C}'$ with defect $2$, by adding a line $L$, a new plus-one generated conic-line 
arrangement $\mathcal{C}' \cup \{L\}$ with defect $2$ {\bf only if} the line $L$ intersects $\mathcal{C}'$ in a certain number of points, depending on the exponents of $\mathcal{C}'$.
We use this geometric obstruction when we search for lines to obtain $\mathcal{CL}_6$ from $\mathcal{CL}_5$, respectively 
$\mathcal{CL}_7$ from $\mathcal{CL}_6$, in the proof of Theorem \ref{thm:CL_arr}.

Proposition \ref{prop:POG_U_line_cond} also elucidates why the conic-line arrangement $\mathcal{CL}_7$ from Theorem \ref{thm:CL_arr} cannot be extended by adding a line to a plus-one generated arrangement with the same restrictions on the types of multiple points and defect $2$. In this case it is easy to see that one cannot add to the conic-line arrangement $\mathcal{CL}_7$  a new line $L$ with the property that such a line intersects $\mathcal{CL}_7$ in $4$ points, while preserving the restrictions on the type of singularities for a new conic-line arrangement $\mathcal{CL}_7 \cup \{L\}$. 


Likewise, it is easy to see that one cannot add to the conic-line arrangement $\mathcal{CL}_8$ a new line $L$ such that $L$ intersects $\mathcal{CL}_8$ in $4$ or $5$ points while preserving the restrictions on the type of singularities for a new plus-one generated conic-line arrangement $\mathcal{CL}_8 \cup \{L\}$ with defect 2.
\end{remark}

\begin{remark}
\label{rem:not_sufficient_cond}
The necessary geometric condition on the line $L$ from Proposition \ref{prop:POG_U_line_cond} is not a {\bf sufficient} condition, as we can see in the next example. Take $L: 2y - \epsilon z = 0$ with $\epsilon^2 = 2$. Consider the conic-line arrangement $\mathcal{CL}:= \mathcal{CL}_8 \cup \{L\}$. Recall from the proof of Theorem \ref{thm:CL_arr} that $\mathcal{CL}_8$ is plus-one generated with exponents $(4,4)$ and defect $2$. Although $|\mathcal{CL}_8 \cap L| = 5$, the arrangement $\mathcal{CL}$ is not plus-one generated, as a \verb}SINGULAR} computation shows.
\end{remark}

Concluding our investigations, let us point out here that in the case of $m \in \{9,10\}$, our methods are not sufficient to decide on the existence/non-existence of such plus-one generated conic-line arrangements since, as usually, such boundary cases are very difficult to handle. For instance, if we assume that $k=1$ and $d=7$, then one has to decide whether the following weak combinatorics are realizable over the complex numbers:
$$(k,d; n_{2},t_{2},n_{3}) \in \bigg\{(1,7;3,1,10),(1,7;4,2,9),(1,7;5,3,8),(1,7;6,4,7),(1,7;7,5,6)\bigg\}.$$
Here we show how to exclude the existence of the weak combinatorics $(k,d;n_{2},t_{2},n_{3})=(1,7;7,5,6)$. In order to do so, we need the following general result.
\begin{theorem}
\label{ntt}
Let $C \subset \mathbb{P}^{2}_{\mathbb{C}}$ be a reduced curve of degree $m\geq 9$ admitting $n_{2}$ nodes, $t_{2}$ tacnodes, and $n_{3}$ ordinary triple points, then for a real number $\alpha \in [1/3, 2/3]$ one has
\begin{equation}
\label{ientt}
    (6\alpha -3\alpha^2)\cdot n_{2} + \bigg(-6\alpha^2 + 15\alpha - \frac{3}{8}\bigg)\cdot t_{2} + \bigg(-\frac{27}{4}\alpha^{2}+18\alpha\bigg)\cdot n_{3} \leq (3\alpha - \alpha^2)m^2 - 3\alpha m.
\end{equation}
\end{theorem}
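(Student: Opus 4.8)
The plan is to derive \eqref{ientt} from A.~Langer's logarithmic orbifold Miyaoka--Yau inequality applied to the $\mathbb{Q}$-divisor pair $(\mathbb{P}^2_{\mathbb{C}}, \alpha C)$, which reads $\overline{c}_1^2 \le 3\,\overline{c}_2$ for the orbifold Chern numbers of the pair. First I would assemble the global numerical data on $\mathbb{P}^2_{\mathbb{C}}$. Since $K_{\mathbb{P}^2_{\mathbb{C}}} + \alpha C = \mathcal{O}_{\mathbb{P}^2_{\mathbb{C}}}(\alpha m - 3)$, one has $\overline{c}_1^2 = (K + \alpha C)^2 = (\alpha m - 3)^2$, while the orbifold second Chern number of the \emph{smooth model} of the pair is the linear interpolation $e(\mathbb{P}^2_{\mathbb{C}}) - \alpha\, e(C) = 3 - 3\alpha m + \alpha m^2$, using $e(\mathbb{P}^2_{\mathbb{C}}) = 3$ and $e(C) = 3m - m^2$ for a smooth curve of degree $m$. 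The role of the hypothesis $m \ge 9$ together with $\alpha \ge 1/3$ is exactly to force $\alpha m \ge \tfrac{1}{3}\cdot 9 = 3$, so that $K + \alpha C$ is nef and big and the pair is of log general type; this is the positivity that licenses Langer's inequality, and it is why the lower end of the admissible interval is $1/3$ uniformly in $m$.

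The point is that the global part of the inequality already produces the right-hand side of \eqref{ientt}: a direct computation gives
$$3\big(3 - 3\alpha m + \alpha m^2\big) - (\alpha m - 3)^2 = (3\alpha - \alpha^2)m^2 - 3\alpha m.$$
When $C$ acquires singularities, the true orbifold $\overline{c}_2$ drops below this smooth-model value by a sum of \emph{local orbifold Euler numbers}, one for each singular point; carrying the factor $3$ through and isolating these local terms on the left produces an inequality of the shape $\sum_{p} \gamma(p,\alpha) \le (3\alpha-\alpha^2)m^2 - 3\alpha m$, where $\gamma(p,\alpha)$ is three times the local orbifold Euler number at $p$.

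It then remains to evaluate $\gamma(p,\alpha)$ on the minimal log resolution of each prescribed singularity type. For a node (the $A_1$ singularity, two transversal branches) and for an ordinary triple point (three concurrent smooth branches) a single blow-up suffices, and one computes the contributions $6\alpha - 3\alpha^2$ and $-\tfrac{27}{4}\alpha^2 + 18\alpha$ respectively. For a tacnode (the $A_3$ singularity, two branches with contact of order two) one needs an iterated sequence of blow-ups, and the contribution works out to $-6\alpha^2 + 15\alpha - \tfrac{3}{8}$. The upper bound $\alpha \le 2/3$ is precisely the log canonical threshold of the ordinary triple point, which is the most restrictive of the three types (the node has threshold $1$ and the tacnode $3/4$); on the interval $[1/3,2/3]$ all three closed-form expressions for $\gamma(p,\alpha)$ are the valid ones, whereas beyond the threshold a different extremal datum governs the local number.

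The main obstacle is the local computation at the tacnode: the $A_3$ resolution requires several successive blow-ups, and one must track carefully how the coefficient $\alpha$ is inherited by each exceptional curve and by the strict transform, as well as the semistability condition that delimits the range of validity. By contrast the node and the triple point involve a single blow-up and are essentially routine. A secondary point to verify is that the interval $[1/3,2/3]$ is simultaneously compatible with the local formulas for all three singularity types and with the global effectivity $\alpha m \ge 3$, which is what makes the stated $\alpha$-range uniform over all degrees $m \ge 9$.
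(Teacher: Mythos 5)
Your plan is the same as the paper's proof in all essentials: both apply Langer's logarithmic orbifold Bogomolov--Miyaoka--Yau inequality to the pair $(\mathbb{P}^2_{\mathbb{C}},\alpha C)$, both obtain the right-hand side $(3\alpha-\alpha^2)m^2-3\alpha m$ from the global Chern-number computation (your computation of it is correct), and both fix the admissible range $[1/3,2/3]$ by combining effectivity ($\alpha\geq 3/m$, which is exactly where $m\geq 9$ enters) with the log canonical threshold $2/3$ of the ordinary triple point. The only difference in execution is that the paper does not redo the local resolutions: it quotes the local orbifold Euler numbers $e_{orb}(p,\mathbb{P}^2_{\mathbb{C}},\alpha D)$ from the literature, namely $(1-\alpha)^2$ for $A_1$, $(3-4\alpha)^2/8$ for $A_3$ (valid for $1/4\leq\alpha\leq 3/4$), and $(2-3\alpha)^2/4$ for $D_4$ (valid for $\alpha\leq 2/3$), and then just substitutes.

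One statement in your write-up is wrong as literally phrased and would fail verification: the quantity $\gamma(p,\alpha)$ entering the left-hand side is \emph{not} three times the local orbifold Euler number. In Langer's inequality the local term is
$3\big(\alpha(\mu_p-1)+1-e_{orb}(p,\mathbb{P}^2_{\mathbb{C}},\alpha D)\big)$,
where $\mu_p$ is the Milnor number; the orbifold Euler number enters with a \emph{minus} sign, corrected by the Milnor-number term. Concretely, at a node $3e_{orb}=3(1-\alpha)^2$, which is not the coefficient $6\alpha-3\alpha^2$ of $n_2$; the latter equals $3\big(1-(1-\alpha)^2\big)$. The Milnor-number term is also indispensable for the other two types: the tacnode ($\mu_p=3$) and the triple point ($\mu_p=4$) contribute $6\alpha$ and $9\alpha$ respectively through $3\alpha(\mu_p-1)$, without which your stated coefficients $-6\alpha^2+15\alpha-\tfrac38$ and $-\tfrac{27}{4}\alpha^2+18\alpha$ cannot be reproduced. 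Since the final coefficients you claim are the correct ones, this is a mislabeling of the intermediate quantity rather than a wrong answer, but the bookkeeping must be repaired. A smaller point: at the boundary case $m=9$, $\alpha=1/3$ the divisor $K_{\mathbb{P}^2}+\alpha C$ has degree $0$, hence is nef but not big, so you should not invoke ``nef and big / log general type''; effectivity of $K+\alpha C$ (the paper's condition $\alpha\geq 3/m$) is what Langer's inequality requires and is all that holds uniformly on $[1/3,2/3]$ for $m\geq 9$.
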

\begin{proof}
We are going to use directly an orbifold version of the Bogomolov-Miyaoka inequality. We will work with the pair $\bigg(\mathbb{P}^{2}_{\mathbb{C}}, \alpha D \bigg)$ which is going to be log-canonical and effective. In order to be effective, one requires that $\alpha \geq \frac{3}{{\rm deg}(C)} = \frac{3}{m}$, and in order to be log-canonical, $\alpha$ should be less than or equal to the minimum of log-canonical thresholds of our singular points, which means that $\alpha \leq {\rm min}\bigg\{1,\frac{3}{4},\frac{2}{3}\bigg\}$. Summing up, based on the first part of our discussion, let $\alpha \in [3/m, 2/3]$. Now we are in a position that allows us to use the following Langer's inequality proved in \cite{Langer}, namely 
\begin{equation}
\label{logMY}
\sum_{p \in {\rm Sing}(\mathcal{C})}  3\bigg( \alpha \bigg(\mu_{p} - 1\bigg) + 1 - e_{orb}\bigg(p,\mathbb{P}^{2}_{\mathbb{C}}, \alpha D\bigg) \bigg) \leq (3\alpha -\alpha^2)m^{2} - 3\alpha m,
\end{equation}
where $\mu_{p}$ is the local Milnor number of $p \in {\rm Sing}(C)$ and $e_{{\rm orb}}\bigg(p,\mathbb{P}^{2}_{\mathbb{C}}, \alpha  D\bigg)$ denotes the local orbifold Euler number of $p \in {\rm Sing}(C)$. In our setting we have the following values:
\begin{table}[ht]
\centering
\begin{tabular}{|c|c|c|}
type   &  $e_{orb}(p,\mathbb{P}^{2}_{\mathbb{C}},\alpha D)$ & $\alpha$ \\ \hline \hline
$A_{1}$ &  $(1-\alpha)^{2}$ & $0 < \alpha \leq 1$ \\
$A_{3}$ &  $\frac{(3-4\alpha)^{2}}{8}$ & $1/4 \leq \alpha \leq 3/4$  \\
$D_{4}$ & $\frac{(2-3\alpha)^{2}}{4}$ & $0<\alpha \leq 2/3$\\ \hline
\end{tabular}\,\,.
\end{table}
\newpage
Assume that $\alpha \in [1/3, 2/3]$, then our inequality follows from plugging the collected data above into \eqref{logMY}.
\end{proof}
\begin{corollary}
There does not exists a conic-line arrangement $\mathcal{CL}$ in $\mathbb{P}^{2}_{\mathbb{C}}$ having the weak combinatorics $(k,d;n_{2},t_{2},n_{3}) =(1,7;7,5,6)$.
\end{corollary}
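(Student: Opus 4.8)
The plan is to derive a contradiction directly from Theorem \ref{ntt}. Since $k=1$ and $d=7$, the degree of the putative arrangement is $m=2k+d=9$, so the hypothesis $m\geq 9$ of Theorem \ref{ntt} is met and inequality \eqref{ientt} is available for all $\alpha\in[1/3,2/3]$. The first step is simply to substitute the weak combinatorics $(n_{2},t_{2},n_{3})=(7,5,6)$ together with $m=9$ into \eqref{ientt}, and to collect, on both sides, the coefficients of $\alpha^{2}$, of $\alpha$, and the constant terms.

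Carrying out this substitution, the left-hand side of \eqref{ientt} becomes $-\tfrac{183}{2}\alpha^{2}+225\alpha-\tfrac{15}{8}$, while the right-hand side becomes $-81\alpha^{2}+216\alpha$. Thus \eqref{ientt} is equivalent to the single quadratic condition
$$h(\alpha):=\frac{21}{2}\alpha^{2}-9\alpha+\frac{15}{8}\geq 0,$$
which, if the arrangement existed, would have to hold for every $\alpha\in[1/3,2/3]$.

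The decisive step is then to evaluate $h$ at a well-chosen point of the admissible interval. The parabola $h$ attains its minimum at $\alpha=3/7$, which lies in $[1/3,2/3]$, and one computes $h(3/7)=-\tfrac{3}{56}<0$. This violates $h(\alpha)\geq 0$, hence contradicts \eqref{ientt}, so the weak combinatorics $(k,d;n_{2},t_{2},n_{3})=(1,7;7,5,6)$ cannot be realized over $\mathbb{C}$.

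I do not expect any genuine obstacle here: once Theorem \ref{ntt} is granted, the argument reduces to one substitution, one rearrangement, and a single numerical evaluation. The only points requiring care are the bookkeeping of the coefficients in \eqref{ientt} and the check that the optimizing value $\alpha=3/7$ indeed lies inside $[1/3,2/3]$, so that the inequality we contradict is legitimately asserted by the theorem.
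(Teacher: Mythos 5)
Your proposal is correct and follows essentially the same route as the paper: both apply Theorem \ref{ntt} with $m=9$ and exhibit a single admissible $\alpha$ at which \eqref{ientt} fails, the paper choosing $\alpha=\tfrac{4}{10}$ (where the discrepancy is $-\tfrac{9}{200}$) and you choosing the minimizer $\alpha=\tfrac{3}{7}$ (giving $-\tfrac{3}{56}$). Your coefficient bookkeeping and the reduction to $h(\alpha)=\tfrac{21}{2}\alpha^{2}-9\alpha+\tfrac{15}{8}\geq 0$ check out, so the argument is sound.
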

\begin{proof}
It follows from Theorem \ref{ntt}, namely we can take $\alpha = \frac{4}{10}$ and then we can check that \eqref{ientt} does not hold for $(k,d;n_{2},t_{2},n_{3}) =(1,7;7,5,6)$.
\end{proof}
\section*{Acknowledgments}
We would like to thank Xavier Roulleau for sharing his example of a conic-line arrangement that was used in Example \ref{XR} and for his help with symbolic computations in \verb}MAGMA}. We would like to thank an anonymous referee for an interesting question that motivated us to prove Proposition \ref{prop:POG_U_line_cond}.

Piotr Pokora was partially supported by The Excellent Small Working Groups Programme \textbf{DNWZ.711/IDUB/ESWG/2023/01/00002} at the Pedagogical University of Cracow. 

Anca M\u{a}cinic was partially supported by a grant of the Romanian Ministry of Education and Research, CNCS - UEFISCDI, project number \textbf{PN-III-P4-ID-PCE-2020-2798}, within PNCDI III.

\bigskip

Anca~M\u acinic,
Simion Stoilow Institute of Mathematics, 
P.O. Box 1-764, RO-014700 Bucharest, Romania. \\
\nopagebreak
\textit{E-mail address:} \texttt{amacinic@imar.ro}

\bigskip
Piotr Pokora,
Department of Mathematics,
University of the National Education Commission Krakow,
Podchor\c a\.zych 2,
PL-30-084 Krak\'ow, Poland. \\
\nopagebreak
\textit{E-mail address:} \texttt{piotrpkr@gmail.com, piotr.pokora@up.krakow.pl}

\begin{thebibliography}{00}


\bibitem{A0} 
T.~Abe, {\em Plus-one generated and next to free arrangements of hyperplanes},
Int. Math. Res. Not., Vol. {\bf 2021}, Issue 12 (2021),  9233 -- 9261.

\bibitem{AD} 
T.~Abe and A.~Dimca, {\em Splitting types of bundles of logarithmic vector fields along plane curves}, 
Internat. J. Math. {\bf 29(8)} (2018), Id. 1850055.

\bibitem{ADen} T.~Abe and G. Denham, 
{\em Deletion-restriction for logarithmic forms on multiarrangements},
\textbf{arXiv:2203.04816}.

\bibitem{dim2}
A. Dimca, On free and plus-one generated curves arising from free curves by addition-deletion of a line. \textit{Bull. Math. Sci.} (2024), \url{doi.org/10.1142/S1664360724500073}.

\bibitem{DIS} A.~Dimca, G. ~Illardi, G. ~Sticlaru,  Addition-deletion results for the minimal degree of a Jacobian syzygy of a union of two curves. \textit{J. Algebra} \textbf{615} (2023), 77 -- 102.


\bibitem{DJP}
A. Dimca, M. Janasz, and P. Pokora, On plane conic arrangements with nodes and tacnodes. \textit{Innov. Incidence Geom.} \textbf{19(2)} (2022), 47 -- 58.

\bibitem{DimcaPokora}
A. Dimca and P. Pokora, On conic-line arrangements with nodes, tacnodes, and ordinary triple points. \textit{J. Algebraic Combin.} \textbf{56(2)} (2022), 403 -- 424.


\bibitem{DimcaSernesi}
A. Dimca and E. Sernesi, Syzygies and logarithmic vector fields along plane curves. (Syzygies et champs de vecteurs logarithmiques le long de courbes planes.) \textit{J. Éc. Polytech., Math.} \textbf{1}: 247 -- 267 (2014).


\bibitem{DS0} 
A.~ Dimca and G.~ Sticlaru, Plane curves with three syzygies, minimal Tjurina curves curves, and nearly cuspidal curves, \textit{Geom. Dedicata}  {\bf 207} (2020), 29 -- 49.

\bibitem{F}
W.~ Fulton, Intersection Theory, Springer-Verlag Berlin Heidelberg 1998, DOI: https://doi.org/10.1007/978-1-4612-1700-8
 
\bibitem{Gal}
A. Ga\l ecka, On the nearly freeness of conic-line arrangements with nodes, tacnodes, and ordinary triple points. \textit{Bol. Soc. Mat. Mex.} \textbf{28} (2022), Article Id 67.

\bibitem{Langer}
A. Langer, Logarithmic orbifold Euler numbers of surfaces with applications. \textit{Proc. London Math. Soc.} \textbf{86}  (2003), 358 -- 396.

\bibitem{MP}
A. Măcinic and P. Pokora, Addition–deletion results for plus-one generated curves. \textit{J. Algebraic Combin.} (2024), \url{doi.org/10.1007/s10801-024-01350-x}.

\bibitem{GM}
G. Megyesi, Configurations of conics with many tacnodes, \textit{Tohoku Math. J.} \textbf{52(4)} (2000), 555 -- 577.

\bibitem{STY} H. ~Schenck, H. ~Terao, M. ~Yoshinaga, 
Logarithmic vector fieldsfor curve configurations in $\mathbb{P}^2$ with quasihomogeneous singularities.  \textit{Math. Res. Lett.} {\bf 25} (2018), 1977 
 -- 1992.

\bibitem{Pokora}
P. Pokora, $\mathcal{Q}$-conic arrangements in the complex projective plane. \textit{Proc. Amer. Math. Soc.} \textbf{151(7)} (2023), 2873 -- 2880.

\bibitem{P} P.~Pokora, On free and nearly free arrangements of conics admitting certain ${\rm ADE}$ singularities. \textit{Ann Univ Ferrara} \textbf{70(3)} (2024), 593 -- 606.


\bibitem{Y} M. ~Yoshinaga,
Freeness of hyperplane arrangements and related topics, \textit{Annales de la Facult\'e des Sciences de Toulouse}, Vol. {\bf XXIII} (2014),  483 -- 512.

\end{thebibliography}
\end{document}